\newtheorem{thm}{Theorem}
\newtheorem{cor}[thm]{Corollary}
\newtheorem{con}[thm]{Conjecture}
\begin{document}
\title{New measures of graph irregularity}
\author{Clive Elphick\thanks{\texttt{clive.elphick@gmail.com}}\quad\quad Pawel Wocjan\thanks{Department of Electrical Engineering and Computer Science, University of Central Florida, Orlando, USA; \texttt{wocjan@eecs.ucf.edu}}}
\date{November 3, 2014}
\maketitle

Dedicated to the late Dr.~C.~S.~Edwards who supervised the Ph.D. of C.~Elphick

\abstract{In this paper, we define and compare four new measures of graph irregularity. We use these measures to prove upper bounds for the chromatic number and the Colin de Verdi\`ere parameter. We also strengthen the concise Tur\'an theorem for irregular graphs and investigate to what extent Tur\'an's theorem can be similarly strengthened for generalized $r$-partite graphs. We conclude by relating these new measures to the Randi\'c index and using the measures to devise new normalised indices of network heterogeneity.} 

\section{Introduction}

Many results in extremal graph theory are exact only for some regular graphs. In this paper we strengthen various bounds, using two degree based measures of irregularity and two spectral measures of irregularity, so that they also become exact for some irregular graphs.     

Let $G$ be a simple and undirected graph with vertex set $V$ with $|V| = n$, edge set $E$ with $|E| = m$, $t$ triangles, clique number $\omega$, chromatic number $\chi$ and vertex degrees $\Delta = d_1 \ge d_2 \ge ...  \ge d_n = \delta.$ Let $\mu$ denote the largest eigenvalue of the adjacency matrix of $G$, let $q$ denote the largest eigenvalue of the signless Laplacian and let $d$ denote the average degree.

Existing measures of irregularity include the following. Collatz and Sinogowitz \cite{collatz57} proposed a spectral measure, namely $\mu - d$.  Bell \cite{bell92} proposed a variance measure, namely $\mathrm{var}(G) = \sum(d_i - d)^2/n = \sum(d_i^2/n) - d^2$ and identified the most irregular graphs for both measures. He also showed that the measures are incomparable for some pairs of graphs. Albertson \cite{albertson97} used the measure $\sum_{ij\in E} |d_i - d_j|$, which has found applications in chemical graph theory. Nikiforov \cite{nikiforov06} used the measure $s(G) = \sum_i |d_i - d|$. These measures are all greater than or equal to zero, with equality for regular graphs, and can be described as $\emph{additive}$ measures of irregularity. It is worth noting that $\mathrm{var}(G) = \mathrm{var}(\overline{G})$ and $s(G) = s(\overline{G})$, where $\overline{G}$ denotes the complement of G. The measures defined in this paper are all greater than or equal to one, with equality for regular graphs, and can be described as $\emph{multiplicative}$ measures of irregularity.

In Section 2 we define and compare the new measures; in Section 3 we use the measures to prove an upper bound for the chromatic number; in Section 4 we strengthen Tur\'an's Theorem for irregular graphs; in Section 5 we apply the new measures to generalised $r-$partite graphs; in Section 6 we bound a graph's radius, Harmonic index and Randi\'c index; and we conclude with bounds for the new measures and new indices of network heterogeneity.   

\section{Measures of irregularity}

Our first measure of irregularity, $\nu$, was introduced by Edwards \cite{edwards77}. He defined a parameter $c_v$, which he termed the ``vertex degree coefficient of variation''  as follows:  

\[
\nu = 1 + c_v^2 = \frac {n\sum_{i\in V} d_i^2}{4m^2}.
\]

Edwards \cite{edwards77} showed that $c_v = 0$ if and only if a graph is regular, so $\nu \ge 1$, with equality only for regular graphs. $c_v$ is the ratio of the standard deviation to the mean of the vertex degrees, which follows the usual definition of a coefficient of variation.
 
Our second measure of irregularity, $\epsilon$, is defined similarly using an "edge degree coefficient of variation" as follows:

\[
 \epsilon = 1 + c_e^2 =\frac{n\sum_{ij\in E} \sqrt{d_i d_j}}{2m^2}.
\]

It follows from Proposition 2.8 in Favaron, Mah\'eo and Sacl\'e \cite{favaron93} that $\epsilon \ge 1$, with equality only for regular graphs.  

Finally we define two spectral measures of irregularity as follows:

\[
\beta = \frac{\mu}{d} = \frac{\mu n}{2m} \mbox{  and  } \gamma = \frac{q}{2d} = \frac{qn}{4m}.
\]

It is well known that $\mu \ge d$, with equality only for regular graphs. Therefore $\beta \ge 1$, with equality only for regular graphs. Similarly it follows from Proposition 3.1 in Cvetkovic, Rowlinson and Simic \cite{cvetkovic07} that $\gamma \ge 1$ with equality only for regular graphs.

We can compare these bounds as follows. Hofmeister \cite{hofmeister88} proved that $\mu^2 \ge \sum_{i \in V} d_i^2 /n$ and Favaron \emph{et al} \cite{favaron93} have proved that $\mu \ge \sum_{ij \in E} \sqrt{d_i d_j}/m$. It is therefore straightforward that:

\[
 \beta^2 \ge \nu \mbox{   and   }  \beta \ge \epsilon.
\]

We can also show that $\nu \ge \epsilon$, as follows:

\[
\nu =  \frac {n\sum_{i\in V} d_i^2}{4m^2} = \frac{n\sum_{ij \in E}(d_i + d_j)}{4m^2} \ge \frac{n\sum_{ij\in E} \sqrt{d_i d_j}}{2m^2} = \epsilon.
\]

For most but not all irregular graphs, $\epsilon^2 > \nu$. 

It is well known that $q \ge 2\mu$ so $\gamma \ge 2\beta$, with equality only for regular graphs. We can also show that $\gamma \ge \nu$ as follows. Liu and Liu \cite{liu09} proved the following inequality for connected graphs but in fact their proof does not use the connectedness assumption:

\[
\sum_{i\in V} d_i^2 \le mq.
\]

Therefore:

\[
\gamma = \frac{nq}{4m} \ge \frac{n\sum d_i^2}{4m^2} = \nu.
\]

$\gamma$ and $\beta^2$ are incomparable, but based on a sample of named graphs in Wolfram Mathematica, the average value of these two measures of irregularity are about equal. Wheels provide examples of graphs for which $\gamma >> \beta^2$. The relationship between $\beta$ and $\epsilon$ is therefore comparable to that between $\gamma$ and $\nu$.

\section{Upper bounds for the Chromatic Number}

\begin{thm}

Let G be a graph with irregularity $\beta$. Then 

\[
\chi(G) \le \frac{n}{\beta}.
\]

\end{thm}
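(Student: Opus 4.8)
The plan is to combine two ingredients: Wilf's classical spectral bound on the chromatic number, $\chi(G) \le 1 + \mu$, together with the elementary spectral bound $\mu^2 + \mu \le 2m$ (equivalently, $\mu \le \frac{1}{2}(\sqrt{8m+1}-1)$). Since $n/\beta = 2m/\mu$, dividing the second inequality through by $\mu$ gives $1 + \mu \le 2m/\mu = n/\beta$, and chaining it with Wilf's bound yields $\chi(G) \le 1 + \mu \le n/\beta$, as required. (We may assume $m \ge 1$, so that $\mu > 0$ and $\beta$ is defined; the edgeless graph is trivial.)

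The real content is therefore the inequality $\mu^2 + \mu \le 2m$, which I would prove using the Perron eigenvector. Let $x \ge 0$ be an eigenvector of the adjacency matrix $A$ for $\mu$, scaled so that $\max_u x_u = x_v = 1$ for some vertex $v$; write $N(v)$ for the neighbourhood of $v$, so $|N(v)| = d_v$. From $(Ax)_v = \mu x_v$ and $0 \le x_u \le 1$ we get $\mu = \sum_{u \in N(v)} x_u \le d_v$. From $(A^2 x)_v = \mu^2 x_v$ and the nonnegativity of every entry of $A^2$ we get $\mu^2 = \sum_w (A^2)_{vw} x_w \le \sum_w (A^2)_{vw} = \sum_{u \in N(v)} d_u$, the number of walks of length two starting at $v$. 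Adding the two estimates gives $\mu^2 + \mu \le \sum_{u \in N(v)} d_u + d_v$, and since $N(v) \subseteq V \setminus \{v\}$ and all degrees are nonnegative, $\sum_{u \in N(v)} d_u \le \sum_{w \ne v} d_w = 2m - d_v$; hence $\mu^2 + \mu \le 2m$.

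I expect the crux to be the middle inequality $\mu^2 \le \sum_{u \in N(v)} d_u$ evaluated where the Perron vector is largest --- this is the one genuinely spectral input, the rest being bookkeeping with edge counts. (Alternatively one can simply cite the known bound $\mu \le \frac{1}{2}(\sqrt{8m+1}-1)$ for the spectral radius in terms of the number of edges and skip this step; one could also replace Wilf's theorem by the elementary observation that a colour-minimal proper colouring has an edge between every pair of classes, so ${\chi \choose 2} \le m$ and $\chi \le \frac{1}{2}(1 + \sqrt{1+8m})$, which combines with the same bound on $\mu$ to the same end.) Finally it is worth recording that equality propagates: the bound is exact for $G = K_k$ together with $n-k$ isolated vertices (there $\mu = k-1$, $2m = k(k-1)$, $\chi = k$, and $n/\beta = k$), which for $k < n$ is an \emph{irregular} graph --- exactly the phenomenon the paper is after.
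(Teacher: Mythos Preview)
Your proof is correct, but it takes a genuinely different route from the paper's. The paper combines the Edwards--Elphick inequality $\mu^2 \le 2m(\chi-1)/\chi$ with the elementary edge-count bound $\chi(\chi-1)\le 2m$: from the latter, $(\chi-1)/\chi \le 2m/\chi^2$, so $\mu^2 \le 4m^2/\chi^2$, i.e.\ $\chi \le 2m/\mu = n/\beta$. You instead chain Wilf's bound $\chi \le 1+\mu$ with the spectral inequality $\mu(\mu+1)\le 2m$, which you prove from scratch via the Perron eigenvector. The two arguments are in a sense dual: the paper starts from an upper bound on $\mu$ involving $\chi$, you start from an upper bound on $\chi$ involving $\mu$, and each is then capped using a purely edge-count inequality. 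Your approach is more self-contained and elementary (no black-box citation of Edwards--Elphick), and it makes the equality case $K_k$ plus isolates immediately visible; the paper's approach is natural for the authors since Edwards--Elphick is their own prior result and is reused later in the paper (for the Colin de Verdi\`ere bound and the spectral Tur\'an theorem), so invoking it here keeps the exposition unified.
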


\begin{proof}
It is well known that $\chi(\chi - 1) \le 2m$ and Edwards and Elphick \cite{edwards83} have proved that $\mu^2 \le 2m(\chi - 1)/\chi$. Therefore:

\[
\mu^2 \le \frac{2m(\chi - 1)}{\chi} \le \frac{4m^2}{\chi^2}.
\]

Hence:

\[
\chi \le \frac{2m}{\mu} = \frac{n}{\beta}.
\]

\end{proof}

This bound strengthens a bound due to Hansen and Vukicevi\'c \cite{hansen09} , who recently proved that $\chi(G) \le 2R(G)$, where $R(G)$ is the Randi\'c index. We discuss this index and the Harmonic index, $H(G)$,  in Section 6.    An alternative strengthening of the bound due to Hansen and Vukicevi\'c \cite {hansen09} has been provided by Deng et al \cite{deng13}, who proved $\chi(G) \le 2H(G).$

\subsection{Colin de Verdi\`ere parameter}

The Colin de Verdi\`ere parameter, $\lambda(G)$, is the basis for the profound conjecture that $\chi(G) \le 1 + \lambda(G)$. There is extensive literature on this conjecture, for example by Holst et al \cite{holst99} and Goldberg \cite{goldberg10}. Several upper bounds for $\chi(G)$ are not upper bounds for $1 + \lambda(G)$.   For example, the Petersen graph demonstrates that $\lambda \not\le \mu$ and $K_{4,5}$ demonstrates that $\lambda \not\le n - \alpha$, where $\alpha$ denotes the independence number, which is the size of the maximum set of vertices, no two of which are adjacent.

We can, however, use $\beta$ to create a new upper bound for $\lambda$ as follows.

\begin{thm}
Let G be a connected graph with irregularity $\beta$. Then:

\[
\lambda(G) \le \frac{n}{\beta} - 1 = \frac{2m}{\mu} - 1.
\]

\end{thm}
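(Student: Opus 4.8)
The plan is to recast the claim as the equivalent inequality $(\lambda(G)+1)\,\mu\le 2m$ (recall $\beta=\mu n/(2m)$, so $n/\beta=2m/\mu$), and then to argue directly from the defining matrix of $\lambda(G)$: the tempting shortcut through Theorem~1, which gives $\chi(G)\le 2m/\mu$, would require $\lambda(G)+1\le\chi(G)$, i.e.\ the still-open Colin de Verdi\`ere colouring conjecture $\chi\le 1+\lambda$, so it is unavailable.

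So fix a symmetric matrix $M$ whose off-diagonal zero pattern is exactly that of $G$, having the Strong Arnold Property, exactly one negative eigenvalue, and corank equal to $\lambda(G)$. Because $G$ is connected, for large $c$ the matrix $cI-M$ is nonnegative and irreducible, so by Perron--Frobenius the eigenvector $z$ of the unique negative eigenvalue of $M$ may be taken strictly positive; moreover $\ker M\perp z$ and $M$ is positive semidefinite on $z^{\perp}$. Hence $-M=\theta\,zz^{T}/\|z\|^{2}-P$ with $\theta>0$ and $P$ positive semidefinite of rank $n-1-\lambda(G)$, and writing $P=YY^{T}$ with rows $y_{1},\ldots,y_{n}$ lying in a Euclidean space of dimension $n-1-\lambda(G)$ one gets $\langle y_i,y_j\rangle=\theta z_iz_j/\|z\|^{2}$ for every non-edge $ij$, the strict inequality $\langle y_i,y_j\rangle<\theta z_iz_j/\|z\|^{2}$ for every edge $ij$, and $\sum_i z_i y_i=0$.

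The heart of the matter is then to show these vectors cannot fit into a space of dimension smaller than $n-2m/\mu$ — equivalently $n-1-\lambda(G)\ge n-2m/\mu$, which is exactly the bound sought. The natural way to force a large dimension is to test the Gram data against the Perron eigenvector $x>0$ of $A$, normalised so that $\|x\|^2=1$ and hence $2\sum_{ij\in E}x_ix_j=\mu$, turning the edge ``deficit'' $2\sum_{ij\in E}\bigl(\theta z_iz_j/\|z\|^{2}-\langle y_i,y_j\rangle\bigr)>0$ into a rank lower bound for $P$ that carries the irregularity factor $\mu/d$. The step I expect to be the main obstacle is making this estimate exact rather than merely qualitative: it must be tight for $G=K_{n}$, where $M=-J$, $z=\mathbf 1$, $P=0$ and $(\lambda+1)\mu=n(n-1)=2m$, so no constant may be wasted. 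A more computational route would imitate the proof of Theorem~1, pairing an Edwards--Elphick-type spectral inequality with $\lambda+1$ in the role of $\chi$, say $\mu^{2}\le 2m\,\lambda/(\lambda+1)$, with a counting bound; but the obvious counterpart of $\chi(\chi-1)\le 2m$, namely $\lambda(\lambda+1)\le 2m$, already fails (for instance $K_{3,3}$ has $\lambda=4$ but $2m=18$), so that route would need its own new idea.
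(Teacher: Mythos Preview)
Your proposal is not a proof: the geometric route through the Colin de Verdi\`ere matrix stops exactly at the point you yourself flag as ``the main obstacle'', namely converting the edge-deficit in the Gram data of $P$ into the sharp rank bound $n-1-\lambda\ge n-2m/\mu$. Nothing in the setup (the vectors $y_i$, the Perron weight $x$, the constraint $\sum z_iy_i=0$) forces such a bound without a genuinely new inequality, and you do not supply one.

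More to the point, you discard the working argument. You note that the ``computational route'' would pair $\mu^{2}\le 2m\,\lambda/(\lambda+1)$ with $\lambda(\lambda+1)\le 2m$, and you reject it because $K_{3,3}$ violates the second inequality. But that is the \emph{only} violation: Pendavingh proved that every connected graph $G\neq K_{3,3}$ satisfies $\lambda(\lambda+1)\le 2m$. The paper's proof is exactly the route you abandoned: for $G\neq K_{3,3}$ combine Pendavingh with $\mu^{2}\le 2m(\omega-1)/\omega$ (Nikiforov) and $\omega\le\lambda+1$ (minor-monotonicity of $\lambda$) to get
\[
\mu^{2}\;\le\;\frac{2m(\omega-1)}{\omega}\;\le\;\frac{2m\lambda}{\lambda+1}\;\le\;\frac{4m^{2}}{(\lambda+1)^{2}},
\]
hence $\lambda+1\le 2m/\mu$; and for $G=K_{3,3}$ verify directly that $\lambda=4<2m/\mu-1=18/3-1=5$. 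The ``new idea'' you said was needed is simply to know that $K_{3,3}$ is the unique exception and to check it by hand.
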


\begin{proof}

One of the deep properties of $\lambda$ is that it is minor-monotone, from which it follows immediately that $\omega \le 1 + \lambda$. (A graph parameter $\phi(G)$ is called minor-monotone if $\phi(H) \le \phi(G)$ for any minor $H$ of $G$.)    

Pendavingh \cite{pendavingh98}  has proved that if $G \not= K_{3,3}$ is a connected graph, then

\[
 \lambda(\lambda + 1) \le 2m.
\]

We therefore need to consider two options. If $G = K_{3,3}$ then (eg see Goldberg) $\lambda = 4 < (2m/\mu) - 1 = 18/3 - 1 = 5$.

If $G \not= K_{3,3}$ then we use a result due to Nikiforov \cite{nikiforov02}, and conjectured by Edwards and Elphick \cite{edwards83}, that:

\[
\mu^2 \le \frac{2m(\omega - 1)}{\omega}.
\]

Therefore:

\[
\mu^2 \le \frac{2m(\omega - 1)}{\omega} \le \frac{2m\lambda}{\lambda + 1} \le \frac{4m^2}
{(\lambda + 1)^2},  
\]

and consequently:

\[
\lambda \le \frac{2m}{\mu} - 1 = \frac{n}{\beta} - 1.
\]

\end{proof}

\section{Tur\'an's Theorem for irregular graphs}

Tur\'an's Theorem, proved in 1941, is a fundamental result in extremal graph theory. In its concise form it states that:

\[
2m \le \frac{(\omega - 1)n^2}{\omega}.
\]

Observe that the result, due to Nikiforov above, that $\mu^2 \le 2m (\omega -1)/\omega$, is equivalent to the following strengthening of the concise Tur\'an theorem:  

\begin{thm}
\[
2m \le \frac{(\omega - 1)n^2}{\omega\beta^2}.
\]

Due to the bounds $\beta^2 \ge \nu$ and $\beta \ge \epsilon$ we obtain:
 
\[
 \mbox{(i)   } 2m \le \frac{(\omega - 1)n^2}{\omega\nu} \mbox{   and  (ii)   } 2m \le \frac{(\omega - 1)n^2}{\omega\epsilon^2}.  
\]
\end{thm}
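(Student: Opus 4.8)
The plan is to deduce everything from Nikiforov's inequality $\mu^2 \le 2m(\omega-1)/\omega$, which is quoted in the excerpt and which I may assume. The observation in the theorem statement is essentially that this inequality is a disguised strengthening of concise Tur\'an, so the ``proof'' amounts to rewriting $\mu$ in terms of $\beta$.

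First I would recall that $\beta = \mu n /(2m)$ by definition, hence $\mu = 2m\beta/n$. Substituting this into Nikiforov's bound gives
\[
\frac{4m^2\beta^2}{n^2} = \mu^2 \le \frac{2m(\omega-1)}{\omega},
\]
and dividing both sides by $4m^2\beta^2/n^2$ (everything is positive for a graph with at least one edge; the regular and edgeless cases are trivial) yields $1 \le \dfrac{(\omega-1)n^2}{2m\,\omega\beta^2}$, i.e.
\[
2m \le \frac{(\omega-1)n^2}{\omega\beta^2},
\]
which is the main inequality. Taking $\beta = 1$ recovers the concise Tur\'an bound, so this is genuinely a strengthening.

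For parts (i) and (ii) I would simply invoke the comparisons proved in Section 2. From $\beta^2 \ge \nu$ we get $1/\beta^2 \le 1/\nu$, so
\[
2m \le \frac{(\omega-1)n^2}{\omega\beta^2} \le \frac{(\omega-1)n^2}{\omega\nu},
\]
and from $\beta \ge \epsilon$ (hence $\beta^2 \ge \epsilon^2$ since both are $\ge 1$) we likewise get $2m \le \dfrac{(\omega-1)n^2}{\omega\epsilon^2}$.

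There is no real obstacle here: the theorem is a reformulation rather than a new estimate, and all the analytic work is hidden inside the cited results (Nikiforov's spectral Tur\'an bound, Hofmeister's inequality $\mu^2 \ge \sum d_i^2/n$ behind $\beta^2 \ge \nu$, and Favaron \emph{et al.}'s inequality behind $\beta \ge \epsilon$). The only point worth a sentence of care is noting that dividing through is legitimate — i.e.\ handling the degenerate cases $m=0$ (no edges, bound vacuous or trivial) and the regular case ($\beta=1$, where the statement reduces exactly to concise Tur\'an) — and that the chain of inequalities in (i) and (ii) has the direction one expects because $\nu,\epsilon,\beta \ge 1$.
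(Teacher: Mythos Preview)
Your argument is correct: substituting $\mu = 2m\beta/n$ into Nikiforov's bound $\mu^2 \le 2m(\omega-1)/\omega$ gives the headline inequality immediately, and the comparisons $\beta^2 \ge \nu$, $\beta \ge \epsilon$ from Section~2 then give (i) and (ii). The paper itself remarks that the $\beta$-inequality is \emph{equivalent} to Nikiforov's result, so for the main statement and for (ii) you are doing exactly what the authors have in mind.

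Where the paper diverges is for part (i): it deliberately supplies a \emph{non-spectral} proof. Instead of passing through $\mu$, it adapts the Moon--Moser clique-counting argument: with $c_h$ the number of $h$-cliques, one has $nm + 3c_3 \ge \sum d_i^2$, and a parameter $\theta$ is defined by $2m = (\theta-1)n^2/(\theta\nu)$; an induction on $h$ using the Moon--Moser recursion $c_{h+1}/c_h \ge (h^2 c_h/c_{h-1} - n)/(h^2-1)$ shows $c_{h+1}/c_h \ge (\theta-h)n/(\theta(h+1))$, so $c_k = 0$ forces $\theta \le k-1 = \omega$. Your spectral route is shorter and perfectly valid, but the paper's combinatorial route has a payoff: the intermediate inequality $c_3/c_2 \ge (\theta-2)n/(3\theta)$ immediately yields Corollary~4, the strengthened triangle bound $t \ge m(4m\nu - n^2)/(3n)$, and its extensions to larger cliques. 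Your proof does not produce these by-products.
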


We provide a non-spectral proof of the bound (i) because it  leads to a corollary. Before presenting this proof we explain briefly the intuition underlying the above inequalities.  Theorem 3 is unusual because it involves $m$ on both sides. A useful way to interpret the theorem is that $\beta$,  $\nu$  and $\epsilon$ are measures of graph irregularity. Therefore all graphs with a given clique number and, for example, irregularity as measured by $\nu \ge 2$ have a maximum number of edges that is at most half of the number implied by Tur\'an's Theorem. 

\begin{proof}

This non-spectral proof is based on a 1962 proof of the concise Tur\'an Theorem due to Moon and Moser \cite{moon62}, as written up in an award winning paper by Martin Aigner entitled ``Tur\'an's Graph Theorem''.   

Let $C_h$ denote the set of $h$-cliques in $G$ with $|C_h| = c_h$. So for example, $c_1 = n, c_2 = m, c_3 = t$ etc. For $A \in C_h$ let $d(A)$ equal the number of $(h+1)$ cliques containing $A$. Moon and Moser \cite{moon62} proved that:

\begin{equation}\label{eq:moon1}
\frac{c_{h+1}}{c_h} \ge \frac{h^2c_h/c_{h-1} - n}{h^2 - 1}, \quad\mbox{for }  h\ge 2.
\end{equation}  

They also proved that:
\[
nc_h + (h^2 - 1) c_{h+1} \ge \sum_{B \in C_{h-1}} d(B)^2
\]
so with $h = 2$ this becomes:
\begin{eqnarray}
nm + 3c_3         & \ge & \sum_{i=1}^n d_i^2, \quad \mbox{or equivalently}  \nonumber \\
\frac{c_3}{c_2} & =   & \frac{c_3}{m} \ge \frac{(\sum d_i^2/m) - n}{3}. \label{eq:moon2}
\end{eqnarray}

Now define $\theta$ as follows:
\begin{equation}\label{eq:moon3}
\frac{(\theta - 2)n}{\theta} = \frac{\sum d_i^2}{m} - n
\end{equation}
which is equivalent to:
\[
2m = \frac{(\theta - 1)n^2}{\theta\nu}.
\]

This definition of $\theta$ differs from that in \cite{moon62} and enables the strengthening of Moon and Moser's proof. Combining (\ref{eq:moon2}) and (\ref{eq:moon3}) we have:

\begin{equation}\label{eq:moon4}
\frac{c_3}{c_2} \ge \frac{\sum d_i^2/m - n}{3} = \frac{(\theta - 2)n}{3\theta}.
\end{equation}

To prove Theorem 3 (i) we need to show that $\theta \le k - 1$ for graphs without $k$-cliques. Consider the claim:

\begin{equation}\label{eq:moon5}
\frac{c_{h+1}}{c_h} \ge \frac{(\theta - h)n}{\theta (h+1)}, \quad \mbox{for }  h\ge 2.
\end{equation}

For $h = 2$, this is inequality (\ref{eq:moon4}). We therefore use induction on $h$ and (\ref{eq:moon1}) as follows:

\begin{eqnarray*}
\frac{c_{h+1} }{c_h } & \ge & \frac{h^2c_h/c_{h-1} - n }{h^2 - 1 } 
\ge \frac{h^2(\theta - h +1)n/(\theta h) - n }{h^2 - 1 } \\
&=& \frac{(\theta -h)(h - 1)n}{\theta(h^2 - 1)} = \frac{(\theta - h)n}{\theta(h + 1)}
\end{eqnarray*}
as claimed in (\ref{eq:moon5}). Now if $G$ contains no $k$-clique then $c_k = 0$ and we infer $\theta \le h = k - 1$ from (\ref{eq:moon5}).  

We have not attempted a non-spectral proof of the $\epsilon$ bound. 
\end{proof}

\subsection{Bounds using $\gamma$}

In Theorems 1, 2 and 3 it is straightforward to prove that $\beta$ can be replaced with $\sqrt{\gamma}$. In each case use the following bound due to He, Jin and Zhang \cite{he13} and Abreu and Nikiforov \cite{abreu13} :

\[
\frac{2n}{2n - q} \le \omega \le \chi.
\]

\subsection{Number of $k$-cliques}

Moon and Moser \cite{moon62} proved that if t is the number of triangles in a graph, then:

\[
t \ge \frac{m(4m - n^2)}{3n}.
\]

In the following corollary we strengthen this bound for irregular graphs. This corollary is exact for some irregular complete tripartite and Tur\'an graphs.

\begin{cor}

Let G be a graph with irregularity $\nu$. Then:

\[
t \ge \frac{m(4m\nu - n^2)}{3n}.
\]

\end{cor}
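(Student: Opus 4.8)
The plan is to extract the bound directly from the Moon--Moser machinery already set up in the proof of Theorem~3, replacing the Cauchy--Schwarz step of the classical argument by the exact value of $\sum d_i^2$ encoded in $\nu$. Recall that in that proof one has, with $h=2$, the Moon--Moser inequality $nm + 3c_3 \ge \sum_{i\in V} d_i^2$, i.e. inequality~(\ref{eq:moon2}). Since $c_3 = t$, this rearranges to
\[
3t \ge \sum_{i\in V} d_i^2 - nm .
\]
The original Moon--Moser triangle bound $t\ge m(4m-n^2)/(3n)$ follows by bounding $\sum d_i^2 \ge (\sum d_i)^2/n = 4m^2/n$; the point of the corollary is that we do not need to throw away the true value of $\sum d_i^2$.

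Next I would invoke the definition of the first irregularity measure, $\nu = n\sum_{i\in V} d_i^2 / (4m^2)$, which gives the identity $\sum_{i\in V} d_i^2 = 4m^2\nu/n$. Substituting this into the displayed inequality yields
\[
3t \ge \frac{4m^2\nu}{n} - nm = \frac{m(4m\nu - n^2)}{n},
\]
and dividing by $3$ gives exactly $t \ge m(4m\nu - n^2)/(3n)$, as claimed. (As a sanity check, since $\nu\ge1$ this is indeed at least the Moon--Moser bound, with both reducing to it in the regular case.)

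There is essentially no analytic obstacle: once Theorem~3 is in place the corollary is a one-line substitution. The only point deserving care is the sharpness claim. Equality in~(\ref{eq:moon2}) holds precisely when the Moon--Moser counting argument is tight at the second level — concretely, for complete multipartite graphs, where a short expansion shows $\sum_i d_i^2 - nm = 3t$ holds identically; combined with the definition of $\nu$ this makes the corollary exact for every complete tripartite graph and in particular for the (irregular) complete tripartite Tur\'an graphs $T(n,3)$ with $3\nmid n$. I would record this by checking the identity on $K_{a,b,c}$, which confirms the equality cases mentioned in the statement.
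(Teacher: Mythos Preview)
Your argument is correct and essentially identical to the paper's: both start from the Moon--Moser inequality $nm+3t\ge\sum d_i^2$ (the paper cites its equivalent form~(\ref{eq:moon4})) and then substitute $\sum d_i^2=4m^2\nu/n$ from the definition of $\nu$. Your additional remarks on the equality cases go slightly beyond the paper's proof, which does not verify sharpness explicitly.
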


\begin{proof}

From inequality (\ref{eq:moon4}), we know that:

\[
t \ge \frac{nm(\theta - 2)}{3\theta} =  \frac{\sum d_i^2 - nm}{3} = \frac{4\nu m^2 - n^2m}{3n}.
\]

\end{proof}

This approach can be continued for larger cliques. For example, we know from (\ref{eq:moon5}) that:

\[
c_4 \ge \frac{tn(\theta -3)n}{4\theta} \ge \frac{m(4m\nu - n^2)}{12}(1 - \frac{3}{\theta}) = \frac{m(4m \nu - n^2)(3m \nu - n^2)}{6n^2}.
\]

\subsection{Remarks}
Theorem 3 is exact for all complete bipartite graphs. The full form of Tur\'an's theorem states that $m(G) \le m(T_r(n))$, where $T_r(n)$ is the complete $r$-partite graph of order $n$ whose classes differ by at most one, with equality holding only if $G = T_r(n)$. It is not the case that for all irregular graphs $m(G) \le m(T_r(n))/\nu$ or that $m(G) \le m(T_r(n))/\epsilon^2$ or that $m(G) \le m(T_r(n))/\beta^2$.

 Tur\'an's theorem can be further strengthened by using more complex lower bounds for $\mu$. For example, if $t_i$ denotes the sum of the degrees of the vertices adjacent to $v_i$, then Yu, Lu and Tian \cite{yu04} have proved that:

\[
\mu^2 \ge \frac{\sum t_i^2}{\sum d_i^2} \ge \frac{\sum d_i^2}{n} \ge \frac{4m^2}{n^2}.
\]

Thus if we define:

\[
\alpha = \frac{n^2 \sum t_i^2}{4m^2 \sum d_i^2} \ge \nu
\]

 it follows, as above, that:

\[
2m \le \frac{(\omega - 1)n^2}{\omega \alpha} \le \frac{(\omega - 1)n^2}{\omega \nu}.
\]

\section{Generalized $r$-partite graphs}

In a series of papers, Bojilov and others have generalized the concept of an $r$-partite graph. They define the parameter $\phi$ to be the smallest integer $r$ for which $V(G)$  has an $r$-partition:

\[
V(G) = V_1 \cup V_2 \cup \ldots \cup V_r, \quad\mbox{such that } d(v) \le n - |V_i|,
\] 
for all $v \in V_i$ and for $i = 1,2, \ldots ,r.$

It is notable that $\phi$ depends only on the degrees of G, and not on the adjacency matrix of G. Indeed, $\phi$ is defined for any set of $n$ integers $a_i$, where $0 \le a_i \le n - 1$, which may or may not correspond to the degrees of a graph. 

Theorem 2.1 in \cite{bojilovcaro12} proves that $\phi$ is a lower bound for the clique number and the greedy Algorithm 1 and Theorem 3.1 in \cite{bojilovcaro12} demonstrate that $\phi$ can be computed in linear time. For $d$-regular graphs, Theorem 4.4 in \cite{bojilovcaro12} proves that:

\[
\phi = \left\lceil\frac{n}{n - d}\right\rceil.
\]

Khadzhiivanov and Nenov \cite{khadzhiivanov04} have proved that $\phi$ satisfies Tur\'an's Theorem:

\begin{equation}\label{bojilov1}
2m \le \frac{(\phi - 1)n^2}{\phi} \le \frac{(\omega - 1)n^2}{\omega}.
\end{equation}

Theorem 4.1 in \cite{bojilovcaro12} provides a simpler proof of (\ref{bojilov1}). The study of $\phi$ has therefore led to a novel proof of the concise version of Tur\'an's Theorem, which also demonstrates that this famous result is in fact a function only of the degrees of a graph rather than its adjacency matrix.

It is of interest to see to what extent (\ref{bojilov1}) can be strengthened in a similar way to Theorem 3. For example, Bojilov and Nenov \cite{bojilovnenov12} have strengthened (\ref{bojilov1}) as follows:

\begin{equation}\label{bojilov2}
2m \le \frac{(\phi - 1)n^2}{\phi\sqrt\nu}.
\end{equation}

Inequality (\ref{bojilov2}) is further strengthened in Theorem 5.4 in \cite{bojilovcaro12} where it is shown that:

\[
\phi \ge \frac{n}{n - d^*_\phi} \ge \frac{n}{n - d^*_{\phi - 1}} \ge \ldots \ge \frac{n}{n - d^*_1}
\]
where
\[
d^*_r = \sqrt[r]{\sum d_i^r/n}.
\]

Observe that inequality (\ref{bojilov2}) is equivalent to $r = 2$ in this chain of inequalities.

 It is therefore natural to ask whether $2m \le (\phi - 1)n^2/\phi\nu$? The answer is no, because, for example, the graph in Figure~\ref{fig:graph751}
 provides a counter-example. 

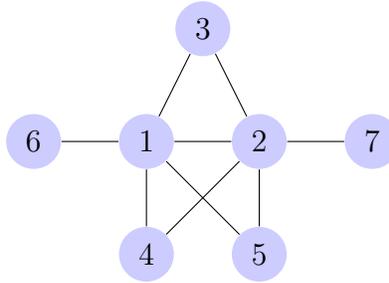
\begin{figure}[h]
\begin{center}
\begin{tikzpicture}
  [scale=.3,every node/.style={circle,fill=blue!20}]
  \node (n1) at (7.5,10) {3};
  \node (n2) at (0,5)  {6};
  \node (n3) at (5,5)  {1};
  \node (n4) at (10,5) {2};
  \node (n5) at (15,5)  {7};
  \node (n6) at (5,0)  {4};
  \node (n7) at (10,0)  {5};

  \foreach \from/\to in {n1/n3,n1/n4,n2/n3,n3/n4,n4/n5,n3/n6,n3/n7,n4/n6,n4/n7}
    \draw (\from) -- (\to);
\end{tikzpicture}
\end{center}
\caption{{\small \texttt{Graph751} on $7$ vertices with degree sequence $(5, 5, 2, 2, 2, 1, 1)$, $\phi = 2$ and $\omega = 3$}}
\label{fig:graph751}
\end{figure}

There are also various spectral lower bounds for $\omega$ of which the simplest, due to Cvetkovic \cite{cvetkovic72}, is:

\begin{equation}\label{bojilov3}
\omega \ge \frac{n}{n - \mu}.
\end{equation}

The graph in Figure~\ref{fig:graph2} is  an example of a graph which does not satisfy (\ref{bojilov3}), with $\omega$ replaced by $\phi$. It also demonstrates that a variety of other spectral lower bounds for $\omega$ are not lower bounds for $\phi$.  Furthermore, $\phi$ does not satisfy the Motzkin-Straus inequality. 

\begin{figure}[h]
\begin{center}
\begin{tikzpicture}
  [scale=.3,every node/.style={circle,fill=blue!20}]
  \node (n1) at (7.5,10) {1};
  \node (n2) at (0,5)  {4};
  \node (n3) at (5,5)  {2};
  \node (n4) at (10,5) {3};
  \node (n5) at (15,5)  {5};
  \node (n6) at (5,0)  {6};
  \node (n7) at (10,0)  {7};

  \foreach \from/\to in {n1/n2,n1/n3,n1/n4,n1/n5,n2/n3,n3/n4,n4/n5,n2/n6,n3/n6,n4/n7,n5/n7,n6/n7}
    \draw (\from) -- (\to);
\end{tikzpicture}
\end{center}
\caption{{\small \texttt{Graph} on $7$ vertices with degree sequence $(4, 4, 4, 3, 3, 3, 3)$, $\phi = 2$, $\mu =3.503$}}
\label{fig:graph2}
\end{figure}
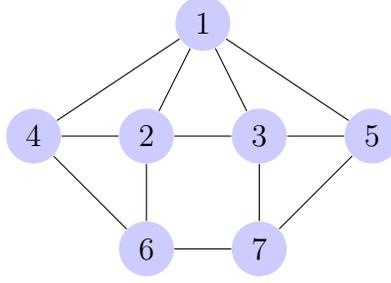

\begin{con}

We have, however,  been unable to find a counter-example to the following conjecture:

\[
2m \le \frac{(\phi - 1)n^2}{\phi\epsilon}.
\]

\end{con}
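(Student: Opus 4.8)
The plan is to first rewrite the conjecture as a statement about degrees alone, and then to attack it by extending the partition argument of Bojilov, Caro and Nenov that lies behind (\ref{bojilov2}). Unwinding the definition of $\epsilon$, the inequality $2m \le (\phi-1)n^2/(\phi\epsilon)$ is equivalent to
\[
\sum_{ij\in E}\sqrt{d_i d_j} \;\le\; \left(1-\frac1\phi\right)nm ,
\]
that is, to $\phi \ge n/(n-\bar\delta_e)$, where $\bar\delta_e = \frac1m\sum_{ij\in E}\sqrt{d_i d_j}$ is the mean ``edge degree''. In this form the conjecture is precisely the edge analogue of the chain $\phi \ge n/(n-d^*_r)$ from Theorem 5.4 of \cite{bojilovcaro12}, with the power mean $d^*_r = \sqrt[r]{\sum d_i^r/n}$ replaced by $\bar\delta_e$. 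Since $\bar\delta_e \ge d^*_2$ for most irregular graphs --- this is exactly the remark ``$\epsilon^2 > \nu$'' above --- the conjecture is strictly stronger than the $r=2$ case (\ref{bojilov2}), and in particular is not implied by it.

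For the main step I would fix a partition $V = V_1 \cup \ldots \cup V_\phi$ realising $\phi$, so that $d(v) \le n - n_i$ for all $v \in V_i$, where $n_i = |V_i|$. For an edge $ij$ with $i \in V_a$ and $j \in V_b$ one has $\sqrt{d_i d_j} \le \sqrt{(n-n_a)(n-n_b)}$, so
\[
\sum_{ij\in E}\sqrt{d_i d_j} \;\le\; \sum_{a\le b} e(V_a,V_b)\,\sqrt{(n-n_a)(n-n_b)} ,
\]
where $e(V_a,V_b)$ counts the edges between $V_a$ and $V_b$ (and within $V_a$ when $a=b$). The aim is then to maximise the right-hand side over all admissible data --- $\sum_i n_i = n$, $\sum_{a\le b} e(V_a,V_b) = m$, $e(V_a,V_b) \le n_a n_b$ for $a \ne b$ and $e(V_a,V_a) \le n_a(n_a-1)/2$ --- and to show that the maximum equals $(1-1/\phi)nm$, attained at the balanced configuration $n_i = n/\phi$, $G = T_\phi(n)$, where the conjecture holds with equality. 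With the edge variables pinned to their extremal values I would expect the part-size variables to be controlled by a convexity/simplex computation of the kind underlying the proof of Theorem 5.4 in \cite{bojilovcaro12}.

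The main obstacle --- and presumably the reason this is only a conjecture --- is that, unlike the vertex power sums $\sum_v d_v^r$ handled in \cite{bojilovcaro12}, the summand $\sqrt{d_i d_j}$ couples the two endpoints of an edge, which may lie in different parts, so the bound does not decouple over the $V_i$. The two obvious ways to break this coupling fail separately. Replacing $\sqrt{d_i d_j}$ by $\frac12(d_i+d_j)$ reduces the conjecture to $\frac12\sum_i d_i^2 \le (1-1/\phi)nm$, i.e.\ to $2m \le (\phi-1)n^2/(\phi\nu)$, which is \emph{false}, as the graph of Figure~\ref{fig:graph751} shows; while bounding each $e(V_a,V_b)$ by $n_a n_b$ before using the degree inequalities discards far too much for sparse graphs (already a perfect matching defeats it). A successful argument therefore has to use $d(v) \le n - n_i$ and the edge count $\sum_{a\le b} e(V_a,V_b) = m$ together and in a balanced way, and most likely also has to exploit that the partition is \emph{optimal} --- equivalently, is produced by the greedy Algorithm~1 of \cite{bojilovcaro12} --- so as to exclude the configurations (a large part of small total degree alongside a small part of large total degree) for which the relevant weighted averages point the wrong way. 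An alternative route, induction on $|E|$ --- deleting an edge does not increase $\phi$ and strictly lowers $m$, hence lowers $(1-1/\phi)nm$ --- founders on the same point: one must bound the increase of $\sqrt{d_x d_y}$ over all edges $xy$ meeting the deleted edge by $(1-1/\phi)n$, which again seems to require essentially this coupling estimate.
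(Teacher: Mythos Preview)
The statement is labelled a \emph{Conjecture} in the paper and the authors explicitly say only that they have been unable to find a counter-example; there is no proof in the paper to compare against. Your write-up is not a proof either, and you acknowledge as much: you reformulate the inequality as $\phi \ge n/(n-\bar\delta_e)$, set up the natural partition-based upper bound on $\sum_{ij\in E}\sqrt{d_i d_j}$, and then correctly identify why the two obvious decouplings (AM--GM to reduce to $\nu$, or bounding $e(V_a,V_b)$ by $n_a n_b$) both fail. So there is no discrepancy to report --- neither you nor the paper proves the statement.

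That said, your reformulation and the diagnosis of the obstruction are sound and go beyond what the paper offers. If you want to push further, the most promising of your suggestions is to exploit optimality of the $\phi$-partition (e.g.\ the greedy ordering of Algorithm~1 in \cite{bojilovcaro12}), since the counterexample of Figure~\ref{fig:graph751} that kills the $\nu$-version has $\phi=2$ but $\omega=3$, and it is precisely the slack between $\phi$ and $\omega$ that the AM--GM step throws away. But as it stands your submission is a (good) discussion of why the conjecture is hard, not a proof, and should be presented as such.
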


\section{Bounds on graph radius, Harmonic and Randi\'c indices}

The Randi\'c index is used in organic chemistry, with bonds between atoms represented by edges in a graph. The Randi\'c index is defined as:

\[
R(G) = \sum_{ij\in E} \frac{1}{\sqrt{d_i d_j}}.
\]
  
An alternative to the Randi\'c index is the Harmonic index, which is defined as:

\[
H(G) = \sum_{ij\in E} \frac{2}{d_i + d_j}.
\]

Using results due to Xu \cite{xu12} it is straightforward to show that:

\[
\frac{n}{2\nu} \le H(G) \le R(G) \le \frac{n}{2}.
\]

Liu \cite{liu13} has recently proved that triangle-free graphs have $H(G) \ge 2m/n$. We can generalise this bound using part (i) of Theorem 3 as follows:

\[
H(G) \ge \frac{n}{2\nu} \ge \frac{\omega m}{(\omega - 1) n} = \frac{2m}{n} \mbox { when  } \omega = 2.
\]

We can also show that:

\[
\frac{m}{\mu} = \frac{n}{2\beta} \le \frac{n}{2\epsilon} \le R(G)
\]

since using Cauchy-Schwartz, we have $R(G).\sum_{ij\in E} \sqrt{d_i d_j} \ge (\sum_{ij\in E} 1)^2 = m^2$. Note that for Star graphs, $n/2\epsilon = \sqrt{n -1}$, which is the lower bound for $R(G)$ due to Bollobas and Erdos \cite{bollobas98}. It is not always the case that $n/2\epsilon \le H(G)$ or that $n/2\beta \le H(G)$.

The eccentricity $ecc(v)$ of a vertex $v$ in a connected graph $G$ is the maximum distance between $v$ and any other vertex $u$ of $G$. The minimum graph eccentricity is the radius, $r$, of the graph.    Xu \cite{xu12} has proved that if $H(G)$ is the Harmonic index, then:

\[
H(G) \ge \frac{m}{n - r}.
\]

This bound on $r$ can be strengthened as follows.

\begin{thm}

Let $G$ be a connected graph with irregularity $\nu$. Then:

\[
 H(G) \ge \frac{n}{2\nu} \ge \frac{m}{n - r}.
\]

\end{thm}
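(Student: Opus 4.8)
The first inequality, $H(G) \ge n/(2\nu)$, has already been obtained above from Xu's bounds via the chain $n/(2\nu) \le H(G) \le R(G) \le n/2$, so the only new content is the second inequality $n/(2\nu) \ge m/(n-r)$, which is precisely a strengthening of Xu's bound $H(G) \ge m/(n-r)$. First I would recast this as a single statement about degrees. Since $\nu = n\sum_{i\in V} d_i^2/(4m^2)$, we have $n/(2\nu) = 2m^2/\sum_{i\in V} d_i^2$, and (using $m > 0$ and $n - r \ge 1$ for a connected graph) the desired inequality $2m^2/\sum_{i} d_i^2 \ge m/(n-r)$ is equivalent to
\[
\sum_{i\in V} d_i^2 \le 2m(n - r).
\]
So it suffices to prove this last inequality.

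The key step is the elementary bound $d_i \le n - ecc(v_i)$, valid for every vertex of a connected graph. To see it, fix $v_i$ and choose a vertex $w$ with $\mathrm{dist}(v_i,w) = ecc(v_i)$; a shortest $v_i$--$w$ path visits $ecc(v_i)$ vertices other than $v_i$, and all of these except the first lie at distance at least $2$ from $v_i$, hence are non-neighbours of $v_i$. Thus at least $ecc(v_i) - 1$ of the $n - 1$ vertices distinct from $v_i$ are non-neighbours, so $d_i \le (n-1) - (ecc(v_i)-1) = n - ecc(v_i)$. Since $ecc(v_i) \ge r$ for every vertex, this gives $d_i \le n - r$ for all $i$. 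I would then multiply by $d_i$ and sum to obtain $\sum_i d_i^2 \le (n-r)\sum_i d_i = 2m(n-r)$, which is exactly the inequality needed. One could isolate $d_i \le n - ecc(v_i)$ as a short lemma, or simply cite it as folklore.

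There is no genuinely hard step here: beyond the algebraic rearrangement, the only content is the one-line counting argument for $d_i \le n - ecc(v_i)$, and the main thing to be careful about is that the reduction is legitimate, which connectedness guarantees (so $r$ is well defined and $n - r \ge 1$). It is worth recording that the whole chain is tight, for instance for $K_n$, where $r = 1$, $\nu = 1$, and $H(K_n) = n/(2\nu) = m/(n-r) = n/2$; tracing equality back through the argument also identifies for which graphs the strengthened bound is exact.
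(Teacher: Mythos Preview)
Your proof is correct and follows essentially the same route as the paper: both reduce the second inequality to $\sum_i d_i^2 \le 2m(n-r)$ via the key observation $d_i \le n - ecc(v_i)$ together with $ecc(v_i)\ge r$. The only cosmetic difference is that the paper writes $\sum_i d_i^2$ as the edge sum $\sum_{ij\in E}(d_i+d_j)$ before applying the bound, whereas you multiply $d_i\le n-r$ by $d_i$ and sum over vertices directly.
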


\begin{proof}

Note that for each vertex $i \in V(G)$, we have $d_i \le n - ecc(i)$. Therefore:

\[
\frac{n}{2\nu} = \frac{2m^2}{\sum_{ij \in E}(d_i + d_j)} \ge \frac{2m^2}{\sum_{ij \in E}(2n - ecc(i) - ecc(j))} \ge \frac{2m^2}{2m(n - r)} = \frac{m}{n - r}.
\]

\end{proof}

\section{Upper Bounds}

Gutman, Furtula and Elphick \cite{gutman14} have proved that:

\[
\beta^2 \le \frac{n^2}{4(n - 1)} \mbox{  ;  } \nu \le \frac{n^2}{4(n - 1)} \mbox{  and  } \epsilon^2 \le \frac{n^2}{4(n - 1)}
\]

with equality for Star graphs. 

We can prove the same upper bound for $\gamma$ as follows.

\begin{thm}
Let $G$ be a connected graph with $n$ vertices. Then

\[
\gamma \le \frac{n^2}{4(n - 1)}.
\]
This bound is exact for Star graphs which have $q = n$.
\end{thm}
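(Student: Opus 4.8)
The plan is to strip away the definition and reduce the assertion to a pure upper bound on $q$. Since $\gamma = qn/(4m)$, the claimed inequality $\gamma \le n^2/(4(n-1))$ is, after clearing denominators, exactly equivalent to
\[
q \le \frac{mn}{n-1},
\]
so it suffices to prove this last inequality for every connected graph $G$.

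To do so I would invoke a known upper bound on the signless Laplacian spectral radius of a connected graph (due to Das): namely
\[
q \le \frac{2m}{n-1} + n - 2,
\]
with equality if and only if $G = K_n$ or $G = K_{1,n-1}$. If a self-contained argument were preferred, the same estimate is accessible from the Rayleigh characterisation $q = \max_{x\neq 0} x^{\mathsf T}(D+A)x / (x^{\mathsf T}x)$ by resolving a test vector into its component along $\mathbf 1$ and its orthogonal complement, but I would reach for the cited inequality first.

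It then remains to compare $\frac{2m}{n-1}+n-2$ with $\frac{mn}{n-1}$, and this is where connectivity does its only work: a connected graph on $n$ vertices has $m \ge n-1$, so $m(n-2) \ge (n-1)(n-2)$, hence $mn - 2m \ge (n-1)(n-2)$, and dividing by $n-1$ gives $\frac{mn}{n-1} \ge \frac{2m}{n-1}+n-2$ (the inequality being trivial when $n\le 2$). Chaining this with Das's bound yields $q \le mn/(n-1)$, i.e. $\gamma \le n^2/(4(n-1))$. Finally, equality throughout forces equality in Das's bound, so $G\in\{K_n,K_{1,n-1}\}$; since we also need $m=n-1$, only the star $K_{1,n-1}$ survives, recovering the stated extremal case (where indeed $q=n$).

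The only genuine obstacle is locating the correct upper bound on $q$: once $q \le \frac{2m}{n-1}+n-2$ is available, the rest is a single line of elementary algebra together with the trivial observation that a connected graph is not too sparse.
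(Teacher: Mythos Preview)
Your proposal is correct. Both your argument and the paper's reduce the claim to the signless Laplacian bound $q \le mn/(n-1)$, after which $\gamma = nq/(4m) \le n^2/(4(n-1))$ is immediate. The paper simply cites this bound directly from Hansen and Lucas (their Proposition~15(5)) and stops there. You instead derive $q \le mn/(n-1)$ in two steps: first invoking Das's inequality $q \le \tfrac{2m}{n-1} + n - 2$, and then using the connectivity hypothesis $m \ge n-1$ to compare $\tfrac{2m}{n-1} + n - 2$ with $\tfrac{mn}{n-1}$. Your route is slightly longer, but it has two modest advantages: it makes transparent exactly where connectedness is used (namely, only to force $m \ge n-1$), and it yields the extremal characterisation for free, since equality in Das's bound together with $m = n-1$ singles out $K_{1,n-1}$. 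The paper, by contrast, asserts the star case without justification beyond the direct computation $q = n$.
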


\begin{proof}

Proposition 15(5) in Hansen and Lucas \cite{hansen10} states that for connected graphs:

\[
q \le \frac{dn^2}{2(n - 1)} = \frac{mn}{n - 1}.
\]

Therefore

\[
\gamma = \frac{nq}{4m} \le \frac{mn^2}{4m(n - 1)} = \frac{n^2}{4(n - 1)}.
\]

\end{proof}

We can obtain alternative bounds on $\nu(G)$ by using bounds on $\sum d_i^2$, which is often referred to as the first Zagreb index. For example, Das \cite{das04} proved that

\[
 \sum d_i^2 \le 2m(\Delta + \delta) - n\Delta\delta.
\]
 
Therefore

\[
1 \le \nu = \frac{n\sum d_i^2}{4m^2} \le \frac{n(\Delta + \delta)}{2m} - \frac{n^2\Delta \delta}{4m^2} = \frac{\Delta + \delta}{d} - \frac{\Delta \delta}{d^2}.
\]

Alternatively, Izumino, Mori and Seo \cite{izumino98} have proved (their Corollary 3.2) that if $0 \le \delta \le d_i \le \Delta$ then:

\[
\frac{1}{n}\sum d_i^2 - \left(\frac{\sum d_i}{n}\right)^2 \le \frac{(\Delta - \delta)^2}{4}.
\]

Therefore

\[
1 \le \nu = \frac{n\sum d_i^2}{4m^2} \le \frac{n^2}{4m^2}\left(\frac{4m^2}{n^2} + \frac{(\Delta - \delta)^2}{4}\right) = 1 + \left(\frac{\Delta - \delta}{2d}\right)^2.
\]

We can obtain alternative bounds on $\epsilon(G)$ by using bounds on the generalised Randic index, $R_\alpha(G)$, with $\alpha = 1/2$. For example, Li and Yang \cite{li04} proved that for $\alpha \ge 0$:

\[
R_\alpha \le \frac{n(n-1)^{1 + 2\alpha}}{2}.
\]

Therefore

\[
1 \le \epsilon = \frac{nR_{1/2}}{2m^2} \le \frac{n^2(n - 1)^2}{4m^2} = \left(\frac{n - 1}{d}\right)^2.
\] 
 
Favaron \emph{et al} \cite{favaron93}  demonstrate that $\nu$ and $\epsilon^2$ are incomparable. However, in practice, $\epsilon^2 \ge \nu$ for almost all graphs.  Indeed we have been unable to find a graph amongst the named graphs in Wolfram Mathematica for which $\epsilon^2 < \nu$.  Considering the irregular named graphs in Wolfram with 16 vertices, the average value of $\nu = 1.22$, the average value of $\epsilon^2 = 1.27$ and the average value of $\beta^2 = 1.32$. As a specific example, DutchWindmill(5,4) has $\nu =1.6$, $\epsilon^2 = 1.675$ and $\beta^2 = 1.92$.

The graphs representing some actual networks, such as the World Wide Web, power grids, academic collaborators and neural networks, are highly irregular. For example, Newman \cite{newman02} calculated that  the World Wide Web graph has $c_v = 3.685$, implying $\nu = 14.6$.  These high values of irregularity for some actual networks may increase the usefulness of the measures described in this paper.     

 \section{Network heterogeneity indices}

Estrada \cite{estrada} and others have noted that many real-world networks have a power law degree distribution. Estrada has proposed that normalised indices of the heterogeneity of such networks should lie in the range $(0, 1)$, with zero corresponding to regular graphs and unity to Star graphs. Estrada devised the following index, using $R(G)$,  which meets these criteria:

\[
\rho_n = \frac{n - 2R}{n - 2\sqrt{n - 1}}.
\]

As discussed above, $\nu, \epsilon$ and $\beta$ are minimised for regular graphs and maximised for Star graphs.    We can therefore devise the following normalised heterogeneity indices:

\[
\nu_n = \frac{n^2 - (n^2/\nu)}{(n - 2)^2} \mbox{  ;  } \epsilon_n = \frac{n - (n/\epsilon)}{n - 2\sqrt{n - 1}} \mbox{  and  } \beta_n = \frac{n - (n/\beta)}{n - 2\sqrt{n - 1}}.
\]

It follows from the inequalities in Section 6 above that:

\[
0 \le \rho_n \le \epsilon_n \le \beta_n \le 1.
\]

It may be that $\beta_n$ is the most useful of these indices, perhaps using results due to Chung, Lu and Vu \cite{chung03} who have investigated the spectrum of random power law graphs. 

\section*{Acknowledgements}
P.W. gratefully acknowledges the support of the National Science Foundation CAREER Award CCF-0746600. This work was supported in part by the National Science Foundation Science and Technology Centre for Science of Information, under grant CCF-0939370.

\end{document}